\let\originalleft\left
\let\originalright\right
\renewcommand{\left}{\mathopen{}\mathclose\bgroup\originalleft}
\renewcommand{\right}{\aftergroup\egroup\originalright}
\setlist[enumerate,1]{label=(\roman*)}
\setlist[enumerate,2]{label=\alph*.}
\let\emptyset\varnothing
\newcommand{\C}{\mathbb{C}}
\newcommand{\B}{\mathbb{B}}
\newcommand{\bH}{\mathbb{H}}
\newcommand{\delH}{\partial\mathbb{H}}
\renewcommand{\Im}{\operatorname{Im}}
\newcommand{\cL}{\mathcal{L}}
\newcommand{\transpose}[1]{{#1}^\intercal} 
\DeclarePairedDelimiter\abs{\lvert}{\rvert}%
\DeclarePairedDelimiter\norm{\lVert}{\rVert}%
\DeclarePairedDelimiter\braces{\lbrace}{\rbrace}%
\newcommand{\set}[1]{\braces{\, #1 \, }}
\newcommand{\suchthat}{\mid}
\newcommand{\interior}[1]{%
	{\kern0pt#1}^{\mathrm{o}}%
} 
\DeclareMathOperator{\weightedorder}{o_{wt}}
\DeclareMathOperator{\adj}{adj}
\DeclareMathOperator{\rank}{rank}
\DeclareMathOperator{\Rk}{Rk}
\DeclareMathOperator{\Aut}{Aut}
\DeclarePairedDelimiter\inner{\langle}{\rangle}%
\let\oldabs\abs
\def\abs{\@ifstar{\oldabs}{\oldabs*}}
\let\oldnorm\norm
\def\norm{\@ifstar{\oldnorm}{\oldnorm*}}
\let\oldinner\inner
\def\inner{\@ifstar{\oldinner}{\oldinner*}}
\newcommand\restrict[2]{{
	\left.\kern-\nulldelimiterspace 
	#1 
	\vphantom{\big|} 
	\right|_{#2} 
}}
\theoremstyle{plain}
\newtheorem{theorem}{Theorem}[section]
\newtheorem{corollary}[theorem]{Corollary}
\newtheorem{lemma}[theorem]{Lemma}
\newtheorem{proposition}[theorem]{Proposition}
\theoremstyle{definition}
\newtheorem{definition}[theorem]{Definition}
\theoremstyle{remark}
\newtheorem{notation}{Notation}
\author{Abdullah Al Helal}
\title{Degree of Ball Maps with Maximum Geometric Rank}
\date{}
\address{Department of Mathematics, Oklahoma State University, Stillwater, OK 74078-5061}
\email{ahelal@okstate.edu}
\subjclass[2020]{32H35, 32A08, 32H02}
\keywords{rational maps, proper holomorphic mappings}
\begin{document}

\begin{abstract}
    This work focuses on the degree bound of maps between balls 
    with maximum geometric rank and minimum target dimension where this geometric rank occurs.
    Specifically, we show that rational proper maps between $\B_n$ and $\B_N$ with $n \geq 2$, $N = \frac{n(n+1)}{2}$, and geometric rank $n-1$ cannot have a degree of more than $n+1$.
\end{abstract}

\maketitle

\section{Introduction}
\label{sec:intro}

Rational proper maps between balls has intrigued mathematicians for a long time 
since Fatou~\cite{fatou-1923-fonctions} proved that 
proper holomorphic ball maps in one dimension are rational.
Understanding and classifying these maps is still an active field of research
a century later.
Alexander~\cite{alexander-1977-proper} found that for $n > 1$,
any proper holomorphic self map on the unit ball $\B_n$ in a complex Euclidean space 
$\C^n$ is necessarily an automorphism, hence rational of degree $1$ due to a classic result.
This essentially completed our understanding of such maps between balls 
of the same dimension and inspired many researchers to search for such 
maps between balls of different dimensions for the next fifty years.
It is a deep theorem of Forstnerič~\cite{forstneric-1989-extending} that 
any proper holomorphic map between $\B_n$ and $\B_N$ that extends smoothly enough up to the boundary is rational.
Thus a general goal is to classify rational proper maps between balls 
up to spherical equivalence, that is, up to automorphisms.
An important subclass is to classify the polynomial ones.
D'Angelo~\cite{dangelo-1988-polynomial} has classified all such maps.
The general problem of classifying rational ones is still an open problem.
In this work, we will always assume $n \geq 2$.

Webster~\cite{webster-1979-mapping} was the first to successfully attempt 
the positive codimensional case showing that for $n > 2$ and $N = n + 1$, 
any such map is spherically equivalent to 
the linear embedding map $z \mapsto (z, 0)$, which means that 
such maps are of degree~$1$.
Faran~\cite{faran-1982-maps} complemented this result by showing that
there are four spherical equivalence classes for $n = 2$ and $N = n + 1$
with maximum degree~$3$, 
but it was not clear why this case shows more equivalence classes
than the $n > 2$ case.
Cima and Suffridge~\cite{cima-1983-reflection} conjectured and
later Faran~\cite{faran-1986-linearity} proved that for $N \leq 2n - 2$,
any such map is spherically equivalent 
to the linear embedding map, implying only degree-$1$ maps, 
and indicating a gap in the possible minimum target dimension~$N$.

Huang~\cite{huang-1999-linearity} proved the same result under weaker regularity hypothesis
using the Cartan-Chern-Moser theory~\cite{chern-1974-real},
which led to a series of results 
\cites{huang-2001-mapping,huang-2003-semirigidity,hamada-2005-rational,huang-2006-new,huang-2014-third}
in the same direction in the following decade.
Huang and Ji~\cite{huang-2001-mapping} showed that there can be two equivalence classes for $n \geq 3$ and $N = 2n - 1$ with maximum degree~$2$.
Among other results,
Hamada~\cite{hamada-2005-rational} found all maps for $n \geq 4$ and $N = 2n$
to have a maximum degree of~$2$.
The work of~\cite{huang-2006-new} and~\cite{andrews-2016-mapping}
classified the case $4 \leq n \leq N \leq 3n - 4$ and 
the case $4 \leq n \leq N = 3n - 3$ respectively,
both showing a maximum degree of $2$.
Lebl~\cite{lebl-2011-normal} classified all degree-$2$ such maps into 
uncountably many spherical equivalence classes represented by monomial maps.
More discussion on this subject can be found in
the articles \cites{forstneric-1993-proper,huang-2003-semirigidity,lebl-2024-exhaustion} 
and the book \cite{dangelo-1993-several} 
and references therein.

One way of measuring the complexity of a rational proper map is its degree.
The celebrated result of Forstnerič~\cite{forstneric-1989-extending} also shows that 
any rational proper map between $\B_n$ and $\B_N$ has a degree bounded by a constant
$N^2(N-n+1)$ depending only on the dimensions of the unit balls, but the bound was not sharp.
D'Angelo~\cite{dangelo-2003-sharp} made a conjecture about the degree bound:
that any rational proper map between balls $F \colon \B_n \to \B_N$ has
\[
    \deg F \leq 
    \begin{cases}
        2N - 3          & n = 2 \\
        \frac{N-1}{n-1} & n > 2
    \end{cases},
\]
where both bounds are known to be sharp if true.
The conjecture has been proved for all such monomial maps 
by D'Angelo, Kos and, Riehl~\cite{dangelo-2003-sharp} for $n = 2$ and
by Lebl and Peters~\cites{lebl-2011-polynomials,lebl-2012-polynomials} for any $n \geq 3$.
Meylan~\cite{meylan-2006-degree} showed that $\deg F \leq \frac{N(N-1)}{2}$ for $n = 2$, and D'Angelo and Lebl~\cite{dangelo-2009-complexity} later proved that for any $n \geq 2$, $\deg F \leq \frac{N(N-1)}{2(2n-3)}$.

The proof of the $n \geq 3$ and $N = 2n - 1$ case~\cite{huang-2001-mapping} 
uses the rank of a certain matrix called geometric rank 
$\kappa_0 \in [0, n-1]$ of the map, which is used to measure the degeneracy of the second fundamental form of the map.
See \cref{sec:preliminaries,def:kappa0} for the definition.
Huang~\cite{huang-2003-semirigidity} proved a deep result showing that 
maps with $\kappa_0 < n - 1$ satisfy a semi-linearity property, 
which maps with $\kappa_0 = n - 1$ may not,
making the latter maps more complicated than the former.
He also showed in the same work that 
$N \geq n + \frac{\kappa_0 (2n - \kappa_0 - 1)}{2}$, 
which implies that the minimum target dimension depends on the geometric rank.
This splits the study of these maps into four problems:

\begin{enumerate}[label=(\Alph*)]
    \item 
Study rational proper maps $F \colon \B_n \to \B_N$ with $\kappa_0 < n - 1$ and $N = n + \frac{\kappa_0 (2n - \kappa_0 - 1)}{2}$;
    \item 
Study rational proper maps $F \colon \B_n \to \B_N$ with $\kappa_0 < n - 1$ and $N > n + \frac{\kappa_0 (2n - \kappa_0 - 1)}{2}$;
    \item \label{enum:max-kappa0} 
Study rational proper maps $F \colon \B_n \to \B_N$ with $\kappa_0 = n - 1$ and $N = \frac{n(n+1)}{2}$;
    \item 
Study rational proper maps $F \colon \B_n \to \B_N$ with $\kappa_0 = n - 1$ and $N > \frac{n(n+1)}{2}$.
\end{enumerate}
This, for example, explains why the $n = 2$ and $n > 2$ cases differ for $N = n + 1$.

The work of 
\cites{huang-1999-linearity,huang-2001-mapping,ji-2004-maps,hamada-2005-rational,huang-2006-new,ji-2018-upper} 
deal with $\kappa_0 < n - 1$ in many different settings.
Huang, Ji, and Xu~\cite{huang-2006-new} confirmed D'Angelo conjecture 
for $n \geq 3$ and geometric rank $\kappa_0 = 1$.
The case of maximum geometric rank $\kappa_0 = n - 1$
has mostly been unresolved.
For Problem~\ref{enum:max-kappa0}, 
the $n = 2$ case has been solved by Faran~\cite{faran-1982-maps}, 
where the sharp degree bound turned out to be $3$.
The $n = 3$ case is still unresolved, but Huang, Ji, and Xu~\cite{huang-2005-several} found a degree bound of $4$.
The current work focuses on the degree bound of Problem~\ref{enum:max-kappa0}.
Our main result is as follows.

\begin{theorem}
\label{thm:max-kappa0}
    Let $F \colon \B_n \to \B_N$ be a proper holomorphic map 
    that is $C^3$-smooth up to the boundary with geometric rank $\kappa_0$, 
    $n \geq 2$, and $N = n + \frac{\kappa_0 (2n - \kappa_0 - 1)}{2}$.
    Then $F$ is rational with $\deg F \leq \kappa_0 + 2$.
\end{theorem}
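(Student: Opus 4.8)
The plan is to combine Forstnerič's rationality theorem with Huang's geometric-rank normalization, and then to convert the minimal-dimension hypothesis $N=n+\frac{\kappa_0(2n-\kappa_0-1)}{2}$ into a rigidity statement that pins down all sufficiently high-order data of the map. First I would pass from the balls to their Siegel realizations via Cayley transforms, so that $F$ becomes a proper map of $\bH_n$ into $\bH_N$ satisfying the basic boundary identity $\Im g=\abs{f}^2+\abs{\phi}^2$ on $\partial\bH_n=\{\Im w=\abs{z}^2\}$, where I split the target coordinates as $(f,\phi,g)\in\C^{n-1}\times\C^{N-n}\times\C$. Since $F$ is $C^3$ up to the boundary, Forstnerič's theorem makes $F$ rational; and because the degree of a rational proper map is invariant under pre- and post-composition with (degree-one) ball automorphisms, it suffices to bound the degree of a single normalized representative. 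Huang's normalization then puts the weighted-degree-$\le 2$ part of $F$ (with $z$ of weight $1$, $w$ of weight $2$) into standard form determined by $\kappa_0$ and parameters $\mu_1,\dots,\mu_{\kappa_0}>0$: namely $f_l=z_l+\tfrac{i}{2}\mu_l z_l w+\cdots$, together with $\phi$-components whose quadratic leading terms are multiples of the \emph{activated} monomials $z_jz_k$, indexed by $1\le j\le\kappa_0$, $j\le k\le n-1$.

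The decisive observation is a dimension count: the number of activated index pairs is exactly $\sum_{j=1}^{\kappa_0}(n-j)=\tfrac{\kappa_0(2n-\kappa_0-1)}{2}=N-n$, which equals the number of available $\phi$-components. I would use this to show that, after a unitary change of the $\phi$-coordinates, the quadratic part of $\phi$ is a \emph{bijective} list of the activated monomials, with no slack whatsoever. This forces the second fundamental form to be as nondegenerate as the dimension permits, pins down the $\mu_l$ together with the admissible cross-terms, and—crucially—means that \emph{no monomial outside the activated list may ever appear} in the identity, since there is no component left to absorb it.

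The heart of the argument, and where I expect the real difficulty, is to feed this rigid quadratic data into the basic identity and propagate it to all orders. Writing $\Im g-\abs{f}^2-\abs{\phi}^2\equiv 0$ on $\partial\bH_n$, I would polarize (pass to the complexified/Segre form so that $\bar z,\bar w$ become independent holomorphic variables), expand into weighted-homogeneous pieces, and run an induction on weighted degree in which each new piece of $f$, $\phi$, $g$ is constrained by the already-determined lower-order pieces and by the no-extra-monomial condition. The obstacle is genuinely combinatorial and is worst precisely at maximal rank $\kappa_0=n-1$, where every tangential direction is activated and the cross-products $(z_jz_k)\overline{(z_lz_m)}$ proliferate; I must show that the only way the identity can close with exactly $N-n$ components is for the Taylor expansion of $g$—equivalently, of the denominator $q$ in a reduced expression $F=P/q$—to terminate at the weighted order corresponding to ball-degree $\kappa_0+2$. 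Concretely, I expect the rigidity from the dimension count to annihilate exactly the terms that would otherwise push the denominator higher, leaving $q$ (and hence each component of $P$) of ball-degree at most $\kappa_0+2$, tracked via the highest activated power of each variable along the induction.

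Finally I would transfer the bound back to ball coordinates, where the Cayley transform and the degree-invariance noted above yield $\deg F\le\kappa_0+2$, and check that the specialization $\kappa_0=n-1$, $N=\tfrac{n(n+1)}{2}$ recovers the abstract's bound $n+1$. Sharpness is witnessed by the degree-$(\kappa_0+2)$ generalized Whitney/Faran monomial maps (for $n=2$ this is Faran's degree-$3$ map $(z_1^3,\sqrt{3}\,z_1z_2,z_2^3)$), so the estimate is best possible within this class.
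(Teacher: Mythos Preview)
Your setup is sound---Cayley transform, Huang's normalization, the basic identity, and the dimension count matching the activated $z_jz_k$ monomials to the $N-n$ available $\phi$-components---and this agrees with the paper. But the decisive step is missing. You propose to run a weighted-degree induction on the polarized identity and show that the Taylor expansion of $g$ (equivalently of the denominator $q$) ``terminates'' at order $\kappa_0+2$. This does not work as stated: the map is rational, not polynomial, so Taylor data at one point do not directly bound $\deg q$; and your claim that ``no monomial outside the activated list may ever appear'' is false beyond the leading weighted order---the normal form explicitly carries \emph{undetermined} third- and fourth-order coefficients $\phi_{jk}^{(\ell)}$ (in $z_\ell w$) and $\phi_{jk}^{(0)}$ (in $w^2$), and these are precisely what must be controlled to get the degree. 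You acknowledge this is ``where the real difficulty'' lies, but the proposal offers no mechanism for closing the induction.

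The paper's route is entirely different and concrete. After disposing of $\kappa_0<n-1$ by citing Ji--Xu, it invokes the Huang--Ji lemma that $\deg F\le K$ once $\deg F_p^{***}|_{Q_0}\le K$ for all $p$ near $0$, where $Q_0=\{w=0\}$ is the Segre variety. Applying the complexified CR fields $\cL_k$ and $\cL_j\cL_k$ to the polarized identity and evaluating at the origin yields an explicit linear system $C\,\overline{\tilde f(\zeta,0)}^{\!\top}=D$ with $C=\bigl[\begin{smallmatrix}I&0\\A&B\end{smallmatrix}\bigr]$, so $\overline{\tilde f(\zeta,0)}$ is explicitly rational in $\bar\zeta$ with denominator $\det B$ and numerator built from $(\adj B)A\,\epsilon$. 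The whole theorem then reduces to two purely linear-algebraic claims, $\deg\det B\le n$ and $\deg(\adj B)A\le n$, proved by analyzing the rank structure of the homogeneous parts of $B$: the quadratic part $B^{\{2\}}$ has rank $1$ (all columns multiples of a single vector $e$), and the linear part $B^{\{1\}}$ has every column in the span of only $n-1$ fixed vectors $E_k$, so enough nonconstant columns force dependency. This Segre-restriction-plus-linear-algebra argument is the paper's substantive contribution, and your outline contains nothing that plays its role.
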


Two immediate corollaries are the following degree bounds for rational proper maps:

\begin{corollary}
    Let $F \colon \B_n \to \B_N$ be a rational proper map with geometric rank $\kappa_0$, 
    $n \geq 2$, and $N = n + \frac{\kappa_0 (2n - \kappa_0 - 1)}{2}$.
    Then $\deg F \leq \kappa_0 + 2$.
\end{corollary}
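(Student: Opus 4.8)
The plan is to pass to the Heisenberg picture, normalize $F$ so that the geometric-rank data becomes explicit, and then exploit the \emph{minimality} of the target dimension to collapse the defining equation to a finite linear system whose determinant controls the degree. First I would settle rationality: the $C^3$ regularity up to the boundary puts $F$ within the scope of Forstnerič's theorem \cite{forstneric-1989-extending}, so $F$ is rational and only its degree is at issue. Since degree is invariant under composition with automorphisms of $\B_n$ and $\B_N$, I would apply a Cayley transform and work on the unbounded realizations, writing $F = (f,\phi,g)$ with $f \in \C^{n-1}$, $\phi \in \C^{N-n}$, $g \in \C$, sending $\partial\bH_n = \{\Im w = \abs{z}^2\}$ into $\partial\bH_N$, so that properness becomes
\[
  \Im g = \abs{f}^2 + \abs{\phi}^2 \qquad \text{on } \Im w = \abs{z}^2 .
\]
Assigning weight $1$ to $z$ and $2$ to $w$, I would invoke the normalization attached to the geometric rank (\cref{sec:preliminaries}, \cref{def:kappa0}; cf.\ \cite{huang-2003-semirigidity}) to arrange $f = z + (\text{higher weighted order})$, $g = w + (\text{higher weighted order})$, and $\phi$ whose weight-$2$ part is the second--fundamental--form tensor. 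Here the hypothesis $N = n + \tfrac{\kappa_0(2n-\kappa_0-1)}{2}$ is the engine of everything: the number of $\phi$-slots equals \emph{exactly} the number of admissible quadratic monomials $z_j z_k$ with $1 \le j \le \kappa_0$ and $j \le k \le n-1$, so after a unitary rotation the weight-$2$ part of $\phi$ is precisely $\bigl(\sqrt{\mu_j+\mu_k}\,z_j z_k\bigr)$ with $\mu_1,\dots,\mu_{\kappa_0} > 0$, and there is no surplus component to absorb anything else.

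Next I would turn the boundary equation into an algebraic identity. Writing $F$ in lowest terms as $(P,\Phi,Q)/q$ with $q(0)=1$ and clearing denominators, properness reads
\[
  \tfrac{1}{2i}\bigl(Q\bar q - \bar Q\, q\bigr) = \abs{P}^2 + \abs{\Phi}^2
  \qquad \text{on } \Im w = \abs{z}^2,
\]
which, by the reflection/polarization principle, extends to a holomorphic identity in $(z,w)$ and independent antiholomorphic variables $(\chi,\sigma)$, valid on the complexified hypersurface $w = \sigma + 2i\,z\cdot\chi$. Expanding both sides into weighted-homogeneous pieces and comparing, the low-weight data is fixed by the normalization, and each successive weighted level yields a \emph{linear} relation for the next batch of coefficients of $P,\Phi,Q,q$. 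The point I would push is that minimality makes these relations overdetermined: any contribution of $f$ or $g$ at a weighted order beyond that of the quadratic tensor must be matched by a $\phi$-component, but all $\phi$-slots are already consumed by $\bigl(\sqrt{\mu_j+\mu_k}\,z_j z_k\bigr)$. Monitoring this through the ranks of the coefficient matrices (whose entries are built from the $\mu_j$) should force the recursion to close after finitely many steps.

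The degree bound should then drop out of this closure. Restricting to the slice $z_{\kappa_0+1}=\dots=z_{n-1}=0$, where the active tensor is nondegenerate, reduces matters to the full-rank situation and collapses the controlling relations to a single structured system $M\,\mathbf{c}=\mathbf{b}$ of size $O(\kappa_0)$, whose entries are low-degree polynomials in $(z,w)$ with $\mu_j$-dependent constants. Solving by Cramer's rule presents $F$ with common denominator $q = \det M$ and numerators obtained from $\adj(M)\,\mathbf{b}$; tracking the $(z,w)$-degrees through this determinant gives $\deg q \le \kappa_0+1$ and numerator degree $\le \kappa_0+2$, hence $\deg F \le \kappa_0+2$. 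The extremal configurations, where $M$ is nonsingular of the full size, should reproduce the generalized Whitney maps and show the bound is attained, recovering Faran's degree $3$ at $n=2$ and the bound of \cite{huang-2005-several} at $n=3$.

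The main obstacle is the closure step: a priori $q,P,\Phi,Q$ may involve every variable to high order, and I must rule out a self-sustaining high weighted-order tail. The delicate point is that the scalar relation from $Q\bar q-\bar Q q$ has to control the vector of mixed-weight cross terms $\inner{\Phi_{(a)},\bar\Phi_{(b)}}$, and cancellations among these could in principle feed an unbounded cascade. Showing that the relevant coefficient matrices have full rank at each level---so that minimal-dimensionality truly annihilates the top terms and $\det M$ genuinely controls the degree---is where the positivity $\mu_j>0$ and the precise shape of the quadratic tensor must be used, and is the part I expect to require the most care.
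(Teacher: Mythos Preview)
Your high-level picture---Cayley transform, geometric-rank normalization, minimality of $N$ forcing the $\phi$-slots to coincide exactly with the admissible $z_jz_k$, and a Cramer's-rule extraction from a structured linear system---matches the paper's. But the execution diverges at the decisive step, and the route you sketch has a genuine gap.

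The paper does \emph{not} run a weighted-homogeneous recursion on the polarized boundary identity and then argue that the tail ``closes''. Instead it uses the Segre-variety degree lemma of Huang--Ji (\cref{prop:deg-segre}): to bound $\deg F$ it suffices to bound $\deg F_p^{***}|_{Q_0}$ uniformly in $p$ near $0$, where $Q_0=\{w=0\}$. This replaces your open-ended closure problem by a single-fiber computation. Concretely, one applies the complexified CR fields $\cL_k$ and $\cL_j\cL_k$ (for $(j,k)\in\mathcal{S}_0$) to the polarized equation, evaluates at $(z,w)=(0,0)$, and obtains a square system
\[
\begin{bmatrix} I & 0 \\ A(\bar\zeta) & B(\bar\zeta) \end{bmatrix}
\transpose{\overline{\tilde f(\zeta,0)}}
= \frac{1}{2i}\begin{bmatrix}\epsilon\\0\end{bmatrix},
\qquad \epsilon_k=2i\bar\zeta_k,
\]
whose entries are polynomials of degree $\le 2$ in $\bar\zeta$. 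Cramer's rule then gives $\overline{\tilde f(\zeta,0)}$ with denominator $\det B$ and numerators built from $(\adj B)A\,\epsilon$; the entire content of the degree bound is the pair of linear-algebra facts $\deg\det B\le n$ and $\deg\bigl((\adj B)A\bigr)\le n$. These are not obtained from positivity of the $\mu_j$ or a rank argument at each weighted level---the mechanism is combinatorial: every nonconstant column of $B$ lies in the span of $n-1$ fixed column vectors $E_1,\dots,E_{n-1}$ (and every degree-$2$ column is a multiple of a single vector $e$), so any term in the multilinear expansion of $\det B$ using $\ge n$ nonconstant columns vanishes automatically, and similarly for $(\adj B)A$ via \cref{lem:adjugate}.

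Two specific problems with your plan: (1) Restricting to the slice $z_{\kappa_0+1}=\dots=z_{n-1}=0$ only bounds the degree of $F$ in the surviving variables, not the full degree; nothing in your outline recovers the dependence on the sliced variables. The paper does not attempt a unified argument here: it simply cites Ji--Xu \cite{ji-2004-maps} for $\kappa_0<n-1$ and proves only the new case $\kappa_0=n-1$, where your slice is vacuous. (2) The ``closure step'' you flag as the main obstacle is genuinely open in your framework---you describe what must be true rather than why it is true. The Segre-variety reduction is precisely the device that makes this step unnecessary, and without it (or an equivalent substitute) the outline is a strategy, not a proof.
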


\begin{corollary}
    Let $n \geq 2$, $N = \frac{n(n+1)}{2}$, and
    $F \colon \B_n \to \B_N$ be a rational proper map 
    with geometric rank $n - 1$.
    Then $\deg F \leq n + 1$.
\end{corollary}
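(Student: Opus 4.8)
The plan is to obtain this statement as a direct specialization of \cref{thm:max-kappa0} to the case of maximal geometric rank $\kappa_0 = n - 1$. First I would check that the hypothesis $N = \frac{n(n+1)}{2}$ is exactly the value of the expression $n + \frac{\kappa_0(2n - \kappa_0 - 1)}{2}$ appearing in the theorem when $\kappa_0 = n - 1$. Indeed $2n - \kappa_0 - 1 = 2n - (n-1) - 1 = n$, so
\[
    n + \frac{(n-1)(2n - (n-1) - 1)}{2} = n + \frac{n(n-1)}{2} = \frac{n(n+1)}{2},
\]
which coincides with the target dimension in the corollary. Thus the dimensional constraints of the theorem and of the corollary agree precisely in this case.

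Second, I would verify that the regularity hypothesis of \cref{thm:max-kappa0} is automatically met. The theorem assumes that $F$ is $C^3$-smooth up to the boundary, whereas here $F$ is only assumed rational. However, a rational proper map $F \colon \B_n \to \B_N$ can have no pole on the closed ball $\closure{\B_n}$: near such a pole $\norm{F}$ would be unbounded, contradicting that $F$ maps into the bounded ball $\B_N$. Hence the denominator of $F$ is nowhere-vanishing on $\closure{\B_n}$, so $F$ extends holomorphically to a neighborhood of $\closure{\B_n}$ and is in particular real-analytic, a fortiori $C^3$, up to the boundary.

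With both hypotheses confirmed, I would apply \cref{thm:max-kappa0} to conclude that $\deg F \leq \kappa_0 + 2 = (n-1) + 2 = n + 1$, which is the asserted bound. I do not anticipate any genuine obstacle here: the entire mathematical content is carried by the main theorem, and the two checks above---the arithmetic identity and the automatic smoothness of rational proper maps---are routine.
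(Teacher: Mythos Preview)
Your approach is correct and matches the paper's treatment: the corollary is stated there as an immediate consequence of \cref{thm:max-kappa0}, and the two checks you isolate---the arithmetic identity $n+\tfrac{(n-1)n}{2}=\tfrac{n(n+1)}{2}$ and automatic boundary regularity of rational proper maps---are exactly what is needed.

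One small imprecision is worth flagging. Your argument that $F$ has no pole on $\closure{\B_n}$ handles interior points, but does not by itself exclude a pole on $\partial\B_n$: the zero set of the denominator can meet $\closure{\B_n}$ only at a boundary point (for instance $Q(z)=z_1-1$ vanishes on $\closure{\B_n}$ only at $(1,0,\dots,0)\in\partial\B_n$), and since $F$ is defined only on the open ball, unboundedness ``near the pole'' need not be witnessed from inside $\B_n$. The clean fix is to invoke the Cima--Suffridge theorem that a rational proper map between balls extends holomorphically across $\partial\B_n$; the paper itself uses exactly this result in \cref{sec:main-proof}. With that citation in place your argument is complete.
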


The $\kappa_0 < n - 1$ case has been proved by Ji and Xu~\cite{ji-2004-maps}.

The organization of this paper is as follows.
In \cref{sec:preliminaries}, we set up the preliminaries required 
for our main proof.
In \cref{sec:main-proof}, we prove our main result, except for two claims,
which will be proved in \cref{sec:claim-proof}.

\section{Normal Forms, Geometric Rank, and Degree Bound}
\label{sec:preliminaries}

Our result is based on the normal form and the degree result in \cref{cor:normal-form-2nd-order} which this section will lead to.

\subsection{Associated Maps}
\label{ssec:associated}

We will use a series of associated maps and state a series of normalization for a rational proper map.
Write $\bH_n = \set{(z, w) \in \C^{n-1} \times \C \suchthat \Im w > \norm{z}^2}$
for the Siegel upper half-space and
$\delH_n = \set{(z, w) \in \C^{n-1} \times \C \suchthat \Im w = \norm{z}^2}$
for its boundary, the Heisenberg group.
We parametrize $\delH_n$ by $(z, \bar{z}, u)$ through the map
$(z, \bar{z}, u) \mapsto (z, u + i \norm{z}^2)$, and
for a non-negative integer $m$ and a function $h(z, \bar{z}, u)$ defined
on a small ball $U$ around $0$ in $\delH_n$,
we write $h = \weightedorder(m)$ if 
$\frac{h(tz, t\bar{z}, t^2 u)}{\abs{t}^m} \to 0$ uniformly for $(z,u)$ 
on every compact subset of $U$ as the real number $t \to 0$.

Let $F = (\tilde{f},\tilde{g}) \colon \delH_n \to \delH_N$ be a rational CR map.
Take any $p = (z_0, w_0) \in \delH_n$,
consider $\sigma_p \in \Aut(\bH_n)$ and $\tau_p^F \in \Aut(\bH_N)$ given by
\begin{gather*}
    \sigma_p(z, w) = (z + z_0, w + w_0 + 2i z \cdot \overline{z_0}), \\
    \tau_p^F(z^*, w^*) = (z^* - \tilde{f}(z_0, w_0), w^* - \tilde{g}(z_0, w_0) - 2i z^* \cdot \overline{\tilde{f}(z_0, w_0)}),
\end{gather*}
where $\cdot$ is the standard bilinear product.
Then 
$F_p := \tau_p^F \circ F \circ \sigma_p  \colon \delH_n \to \delH_N$ 
is a rational CR map with
$F_p(0) = 0$.
It follows from Huang's pioneer paper~\cite{huang-1999-linearity}*{Section~4} that 
there are automorphisms
$H_p, G_p \in \Aut(\bH_N)$ and $F_p^* := H_p \circ F_p$ such that 
the rational CR map
$F_p^{**} = (f_p^{**}, \phi_p^{**}, g_p^{**}) := G_p \circ F_p^* \colon \delH_n \to \delH_N$ 
satisfies the following normalization condition:
\begin{gather*}
    f_p^{**} = z + \frac{i}{2} a_p^{\{1\}}(z) w + \weightedorder(3), \\
    \phi_p^{**} = \phi_p^{\{2\}}(z) + \weightedorder(2), \\
    g_p^{**} = w + \weightedorder(4),
\end{gather*}
with 
$\overline{z} \cdot \overline{a_p^{\{1\}}(z)} \norm{z}^2 = \norm{\phi_p^{\{2\}}(z)}^2$,
where we denote by $h^{\{j\}}(z)$ a homogeneous polynomial degree $j$ in $z$, that is,
$h^{\{j\}}(cz) = c^j h^{\{j\}}(z)$ for any $c \in \C$.

\subsection{The Geometric Rank}
\label{ssec:geometric-rank}

The normalization lets us write
$a_p^{\{1\}}(z) = z \mathcal{A}_p$, 
where 
\begin{equation}
\label{eq:geomtric-rank-matrix}
    \mathcal{A}_p = -2i \left( \frac{\partial^2 (f_p)^{**}_\ell}{\partial z_j \partial w} \right)_{1 \leq j, \ell \leq n-1}
\end{equation}
is an $(n-1) \times (n-1)$ Hermitian positive semidefinite matrix.
Write $\rho_n \colon \bH_n \to \B_n$ for the Cayley transformation,
which extends to $\rho_n \colon \partial\bH_n \to \partial\B_n$.

\begin{definition}[Geometric Rank]
\label{def:kappa0}
    We define the geometric rank $\Rk_F(p)$ of 
    a rational CR map $F \colon \delH_n \to \delH_N$ at $p \in \delH_n$ to be the rank of the $\mathcal{A}_p$ from \cref{eq:geomtric-rank-matrix}, 
    and the geometric rank of $F$ to be
    \[
        \kappa_0 = \max_{p \in \delH_n} \Rk_F(p).
    \]
    The \emph{geometric rank} of a rational proper map $F \colon \B_n \to \B_N$ 
    is defined to be the geometric rank of 
    $\rho_N^{-1} \circ F \circ \rho_n$.
\end{definition}

Note that $0 \leq \kappa_0 \leq n - 1$.
Huang~\cite{huang-2003-semirigidity}*{Lemma~2.2~(B)} showed that 
$\kappa_0$ is independent of $H_p$ or $G_p$ and in fact depends only on the spherical equivalence class of $F$.
The geometric rank is used to measure the degeneracy of the second fundamental form of the map.
The minimum geometric rank $\kappa_0 = 0$ is associated with linear fractional maps~\cite{huang-1999-linearity}.

To put the map $F$ into one more normal form, 
we write
$\mathcal{S}_0 = \set{(j, k) \suchthat 1 \leq j \leq \kappa_0, 1 \leq k \leq n - 1, j \leq k}$,
$\mathcal{S}_1 = \set{(j, k) \suchthat
j = \kappa_0 + 1, k \in \set{\kappa_0 + 1, \dots, N - n - P(n, \kappa_0)}}$,
$\mathcal{S} = \mathcal{S}_0 \cup \mathcal{S}_1$,
and
$P(n, \kappa_0) = \frac{\kappa_0 (2n-\kappa_0-1)}{2}$,
number of elements in $\mathcal{S}_0$.
We can use Lemma~3.2 and its proof from Huang~\cite{huang-2003-semirigidity} to show that
if $\Rk_F(0) = \kappa_0$, 
then
$N \geq n + P(n, \kappa_0)$ and
there is an automorphism $\gamma_p \in \Aut(\bH_N)$ such that the rational CR map
$F_p^{***} = (f, \phi, g) := \gamma_p \circ F_p^{**} \colon \delH_n \to \delH_N$ satisfies
the following normalization condition:
\begin{gather*}
    f_j = z_j + \frac{i\mu_j}{2} z_j w + \weightedorder(3), \quad
    \frac{\partial^2 f_j}{\partial w^2}(0) = 0, \quad 
    \mu_j > 0, \quad j = 1, \dots, \kappa_0, \\
    f_j = z_j + \weightedorder(3), \quad
    j = \kappa_0 + 1, \dots, n - 1, \\ 
    g = w + \weightedorder(4), \\
    \phi_{jk} = \mu_{jk} z_j z_k + \weightedorder(2), \\
    \text{where } (j,k) \in \mathcal{S} \text{ with } \mu_{jk} > 0
    \text{ for } (j,k) \in \mathcal{S}_0 \text{ and } \mu_{jk} = 0
    \text{ otherwise}.
\end{gather*}
Moreover, 
$\mu_{jk} = \sqrt{\mu_j + \mu_k}$ for $j, k \leq \kappa_0, j \neq k$,
and $\mu_{jk} = \sqrt{\mu_j}$ for $j \leq \kappa_0 < k \text{ or } j = k \leq \kappa_0$.

\subsection{A Degree Result}

Now let us focus on how to get a degree estimate from the normal forms. 

\begin{definition}[Degree of a Rational Map]
    Let $F = \frac{P}{Q} = \frac{(P_1, \dots, P_N)}{Q}$ be a rational map from $\C^n$ into $\C^N$ in reduced terms.
    We define the \emph{degree} of $F$ to be
    \[
        \deg F = \max(\deg P_1, \dots, \deg P_N, \deg Q).
    \]
\end{definition}

Denote the Segre variety by 
$Q_{(\zeta, \eta)} = \set{(z, w) \suchthat \frac{w - \bar\eta}{2i} = z \cdot \overline{\zeta}}$.
A useful result that we will use to find a degree estimate is
due to Huang and Ji~\cite{huang-2001-mapping}*{Lemma~5.4}:

\begin{proposition}
\label{prop:deg-segre}
Let $F$ be a rational map from $\C^n$ into $\C^N$ in reduced terms
and $K$ a positive integer such that for all $p \in \delH_n$ close to the origin, 
$\deg F|_{Q_p} \leq K$.
Then $\deg F \leq K$.
\end{proposition}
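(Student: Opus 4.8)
The plan is to show that the degree of $F$ is generically attained on a Segre variety, and that the Segre varieties through points near the origin form a family that is too large to avoid this generic behavior.

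First I would observe that each Segre variety $Q_p$ with $p = (\zeta, \eta)$ is simply the affine complex hyperplane $w = \bar\eta + 2i\, z \cdot \bar\zeta$, realized as a graph over the coordinates $z \in \C^{n-1}$. Substituting this relation into $F = P/Q$ turns $F|_{Q_p}$ into a rational map of $z$ alone. I would therefore parametrize the family of all hyperplanes of this shape, $H_{a,b} \colon w = a + b \cdot z$, by $(a,b) \in \C \times \C^{n-1} \cong \C^n$, observing that the Segre varieties correspond precisely to the parameter values $(a,b) = (\bar\eta, 2i\bar\zeta)$.

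Next, writing $d = \deg F$, I would prove that $\deg F|_{H_{a,b}} = d$ for all $(a,b)$ outside a proper algebraic subvariety $W \subsetneq \C^n$. This needs two facts. For the first, choose $R \in \{P_1, \dots, P_N, Q\}$ with $\deg R = d$ and leading homogeneous part $R_d$; the degree-$d$ part of $R|_{H_{a,b}}$ is $R_d(z, b \cdot z)$, and this cannot vanish identically in $z$ for every $b$, since for a fixed $z \neq 0$ the quantity $b \cdot z$ ranges over all of $\C$, so $R_d(z,b\cdot z) \not\equiv 0$ holds on a Zariski-dense open set of $b$. For the second, since $F$ is in reduced terms the polynomials $P_1, \dots, P_N, Q$ have no common factor, so their common zero set $Z$ has codimension at least $2$; a generic section $Z \cap H$ then has codimension at least $2$ inside $H$, hence is too small to contain a hypersurface of $H$, and so the restricted numerators and denominator remain coprime and no cancellation can lower the degree. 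Combining the two, the locus where $\deg F|_{H_{a,b}} < d$ lies in a proper algebraic subvariety $W$.

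Finally, as $p = (\zeta,\eta)$ ranges over $\delH_n$ near $0$, the Segre parameters $(\bar\eta, 2i\bar\zeta)$ trace out a real-analytic submanifold $M \subset \C^n$ of real dimension $2n-1$; indeed the assignment is an injective immersion, since $b = 2i\bar\zeta$ recovers $\zeta$ and $\Re a$ recovers $u = \Re\eta$. Because $W$ is a proper complex algebraic subvariety, $\dim_{\R} W \le 2n-2 < 2n-1 = \dim_{\R} M$, so $M \not\subseteq W$; as both are real-analytic and the parameter space is connected, the real-analytic identity theorem forces $M \cap W$ to be nowhere dense in $M$, even in an arbitrarily small neighborhood of the origin. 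Hence there exist $p$ arbitrarily close to $0$ with $\deg F|_{Q_p} = d$. Since the hypothesis gives $\deg F|_{Q_p} \le K$ for all such $p$, we conclude $d \le K$, that is, $\deg F \le K$. The main obstacle is the cancellation control in the second step: guaranteeing that the \emph{reduced} degree does not drop on a generic Segre variety. This is exactly where the reduced-terms hypothesis is essential, entering through the codimension bound on the common zero set $Z$ and the behavior of generic hyperplane sections.
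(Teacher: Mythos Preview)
The paper does not actually prove this proposition; it quotes the result as Lemma~5.4 of Huang--Ji \cite{huang-2001-mapping} and uses it as a black box. So there is no in-paper proof to compare against.

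Your argument is correct and self-contained. The two key steps are exactly the right ones: (a) for a single $R\in\{P_1,\dots,P_N,Q\}$ of top degree $d$, the leading part of $R|_{H_{a,b}}$ is $R_d(z,b\cdot z)$, which is nonzero for a Zariski-open set of $b$; and (b) coprimality survives restriction to a generic hyperplane because a common factor of the restricted numerators and denominator would cut out a hypersurface of $H_{a,b}$ lying in $Z\cap H_{a,b}$, while $Z$ has pure codimension $\ge 2$ so $Z\cap H_{a,b}$ has codimension $\ge 2$ for generic $(a,b)$. Together these give that the bad locus $W\subset\C^n$ is a proper complex algebraic set, and your dimension count $\dim_\R W\le 2n-2<2n-1=\dim_\R M$ combined with real-analytic continuation on the connected Heisenberg group correctly shows $M\not\subset W$ even locally near $0$.

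Two small points worth tightening if you write this out formally. First, make explicit that $W$ is \emph{complex} algebraic: both obstructions are cut out by polynomial conditions in $(a,b)$ over $\C$ (vanishing of all $z$-coefficients of $R_d(z,b\cdot z)$, and containment of an $(n-2)$-dimensional component of $Z$ in $H_{a,b}$, which is a finite union of linear conditions), so the real-dimension bound $2n-2$ is legitimate. Second, in step~(b) you should note that $\dim(Z_i\cap H)\le\dim Z_i-1$ whenever $Z_i\not\subset H$ follows from Krull's principal ideal theorem; this is what forces the bad set to consist only of hyperplanes containing some top-dimensional component of $Z$.
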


Writing $\beta_p := \gamma_p \circ G_p \circ H_p \circ \tau_p^F$ gives us
$F_p^{***} = \beta_p \circ F \circ \sigma_p$.
Notice that
\[
    \sigma_p(Q_0) = Q_p.
\]
Since all automorphisms of $\Aut(\bH_N)$ are of degree $1$, so is $\beta_p$.
We see that
\[
    \deg F_p^{***}|_{Q_0}
    = \deg \beta_p \circ F \circ \sigma_p|_{Q_0}
    = \deg F \circ \sigma_p|_{Q_0}
    = \deg F|_{Q_p},
\]
which tells us that the condition $\deg F|_{Q_p} \leq K$ from \cref{prop:deg-segre} 
is equivalent to $\deg F_p^{***}|_{Q_0} \leq K$.

We summarize these results in the following.
Write $\phi_{jk}^{(\ell)}$ and $\phi_{jk}^{(0)}$ for the coefficient of $z^\ell w$ and $w^2$ respectively in the Taylor series of $\phi_{jk}$.
We get the following normal form up to 2nd order and degree result that will be useful to find degrees.

\begin{corollary}
\label{cor:normal-form-2nd-order}
Let $F \colon \delH_n \to \delH_N$ be a rational CR map of
geometric rank $\kappa_0$.
Then 
\begin{enumerate}
    \item \label{cor:minimum-N}
$N \geq n + P(n, \kappa_0)$.

    \item \label{cor:normal-form}
For every $p \in \delH_n$, 
$F$ is spherically equivalent to a rational CR map 
$F_p^{***} = (f, \phi, g)$
preserving the origin and
satisfying the following normalization condition:
\begin{gather*}
    f_j = z_j + \lambda_j z_j w + o(2), \\
    \phi_{jk} = \mu_{jk} z_j z_k 
    + \sum_{\ell = 1}^{n-1} \phi_{jk}^{(\ell)} z_\ell w + \phi_{jk}^{(0)} w^2
    + o(2), \\
    g = w + o(2),
\end{gather*}
where 
$\lambda_j = 0 \text{ for } j > \kappa_0, 
\lambda_j \neq 0 \text{ otherwise}$, and
$\mu_{jk} = 0 \text{ for } k \geq j = \kappa_0 + 1,
\mu_{jk} > 0 \text{ otherwise}$.

    \item \label{cor:segre}
Moreover, let $K$ be a positive integer such that 
for all $p \in \delH_n$ close to the origin, 
$\deg F_p^{***}|_{Q_0}  \leq K$.
Then $\deg F \leq K$.
\end{enumerate}
\end{corollary}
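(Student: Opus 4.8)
The plan is to assemble the three assertions from the constructions of the two preceding subsections, since this corollary is a repackaging of the normal-form and degree results already in place. I would first settle (i). Because $\kappa_0 = \max_{p}\Rk_F(p)$ is attained and the entries of $\mathcal{A}_p$ depend real-analytically on $p$, the locus $\set{\Rk_F = \kappa_0}$ is open and dense; choosing a maximal-rank point and translating by the corresponding $\sigma_p$ makes the origin a point of maximal geometric rank, so that $\Rk_{F_p}(0)=\kappa_0$. The bound $N\ge n+P(n,\kappa_0)$ is then exactly Huang's Lemma~3.2, as recorded in \cref{ssec:geometric-rank} (via Huang~\cite{huang-2003-semirigidity}).

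For (ii), I would begin from the weighted normal form $F_p^{***}=(f,\phi,g)$ produced in \cref{ssec:geometric-rank} and rewrite each component by collecting monomials of ordinary total degree at most $2$, absorbing everything of higher ordinary degree into $o(2)$. For the $f_j$ the leading term is $z_j$, and the weighted-order-$3$ description kills every monomial of weighted degree at most $3$ except $\frac{i\mu_j}{2}z_j w$; setting $\lambda_j := \frac{i\mu_j}{2}$ gives $\lambda_j\ne 0$ for $j\le\kappa_0$ (since $\mu_j>0$) and $\lambda_j=0$ for $j>\kappa_0$. For the $\phi_{jk}$ the weighted leading term is $\mu_{jk}z_jz_k$, and the only ordinary-degree-$2$ monomials left undetermined at weighted order $2$ are $z_\ell w$ and $w^2$, which I would name $\phi_{jk}^{(\ell)}$ and $\phi_{jk}^{(0)}$ exactly as in the definition preceding the corollary; the sign conditions $\mu_{jk}>0$ versus $\mu_{jk}=0$ are read off directly from the $\mathcal{S}_0$ versus $\mathcal{S}_1$ prescription. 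Finally $g=w+o(2)$ is immediate from $g=w+\weightedorder(4)$.

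Part (iii) I would obtain by combining \cref{prop:deg-segre} with the degree identity displayed just before the corollary. Since $\beta_p=\gamma_p\circ G_p\circ H_p\circ\tau_p^F$ is an automorphism of $\bH_N$ and hence of degree $1$, and since $\sigma_p(Q_0)=Q_p$, restriction to the relevant Segre varieties gives $\deg F_p^{***}|_{Q_0}=\deg F|_{Q_p}$. Thus the hypothesis $\deg F_p^{***}|_{Q_0}\le K$ for all $p$ near the origin is equivalent to $\deg F|_{Q_p}\le K$ there, and \cref{prop:deg-segre} then forces $\deg F\le K$. (Because the maximal-rank locus is open and now contains the origin, the normalization of (ii) is available at all $p$ close to $0$, which is what this step requires.)

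The main obstacle I anticipate is the bookkeeping in (ii): one must verify that the passage from the weighted filtration (with $z$ of weight $1$ and $w$ of weight $2$) to the ordinary-degree filtration neither drops nor misnames a degree-$2$ monomial. The single substantive point is the vanishing of the $w^2$ coefficient in each $f_j$, so that its ordinary-degree-$2$ part is precisely $\lambda_j z_j w$: for $j\le\kappa_0$ this is the listed condition $\frac{\partial^2 f_j}{\partial w^2}(0)=0$, whereas for $j>\kappa_0$ the $w^2$ term sits in $\weightedorder(3)$ and must instead be extracted from the construction behind Huang's Lemma~3.2 together with the boundary identity $\Im g=\norm{f}^2+\norm{\phi}^2$ at low weighted order. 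Everything else is a direct reindexing of the already-established normal form.
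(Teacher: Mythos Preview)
Your proposal is correct and matches the paper's approach: the corollary is presented there as a summary of the preceding constructions (the normal forms of \cref{ssec:associated,ssec:geometric-rank} and the degree-transfer argument following \cref{prop:deg-segre}), with no separate proof, and you have supplied exactly the details that summary omits. Your observation that the maximal-rank locus is open, so that after translating a maximal-rank point to the origin the normalization of part~(ii) is available for all nearby $p$, is the point the paper leaves implicit and is precisely what part~(iii) requires. The one subtlety you flag---the vanishing of the $w^2$ coefficient in $f_j$ for $j>\kappa_0$---is genuine for the general-$\kappa_0$ statement, but note that it is irrelevant to the paper's application in \cref{sec:main-proof}, which works exclusively in the case $\kappa_0=n-1$ where there are no indices $j>\kappa_0$.
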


\section{Proof of the Main Result}
\label{sec:main-proof}

\subsection{Partial Normalization}

Let $F \colon \B_n \to \B_N$ be a proper holomorphic map 
that is $C^3$-smooth up to the boundary
with geometric rank $\kappa_0$,
$n \geq 2$, and $N = n + \frac{\kappa_0 (2n - \kappa_0 - 1)}{2}$.
Since $0 \leq \kappa_0 \leq n-1$, we get $N \leq \frac{n(n+1)}{2}$ 
and hence 
$F$ is a rational map by~\cite{huang-2005-several}*{Corollary~1.4}.
Let $d$ be the degree of the map $F$.
As the $\kappa_0 < n - 1$ case has been proved in~\cite{ji-2004-maps},
we will assume $\kappa_0 = n - 1$.

The map $F$ is a rational proper map between balls and so extends holomorphically across the boundary $\partial\B_n$ due to 
a well-known result by Cima and Suffridge~\cite{cima-1990-boundary},
and takes $\partial\B_n$ to $\partial\B_N$.
Then the extension $\rho_N^{-1} \circ F \circ \rho_n \colon \delH_n \to \delH_N$,
which we will also call $F$, is a rational CR map of degree~$d$,
as $\rho_n$ and $\rho_N^{-1}$ are rational maps of degree~$1$.
Take any $p \in \delH_n$ near the origin.
Using \cref{cor:normal-form-2nd-order} \ref{cor:normal-form}
with $\kappa_0 = n - 1$,
we get that 
$F$ is spherically equivalent to the origin preserving map 
\[
    F^{***}_p
    = (\tilde{f}, g) 
    = (f, \phi, g) 
    = (f_1, \dots, f_{n-1}, \phi_1, \dots, \phi_{N-n}, g)
\]
with
\begin{gather*}
    f_j = z_j + \lambda_j z_j w + o(2), \\
    \phi_{jk} = \mu_{jk} z_j z_k 
    + \sum_{\ell = 1}^{n-1} \phi_{jk}^{(\ell)} z_\ell w + \phi_{jk}^{(0)} w^2
    + o(2), \\
    g = w + o(2),
\end{gather*}
where 
$\lambda_j \neq 0$ for $j = 1, \dots, n - 1$,
$\mu_{jk} > 0$ for $(j,k) \in \mathcal{S} = \mathcal{S}_0$,
and $\mathcal{S}_1 = \emptyset$.

To prove that $d = \deg F \leq n+1$, it is sufficient to prove that
for all $p \in \delH_n$, $\deg F^{**}_p|_{Q_0} \leq n+1$
because of \cref{cor:normal-form-2nd-order} \ref{cor:segre}.
Here $Q_0 = \set{w = 0}$.

Consider the CR vector fields
\[
    L_k = \diffp{}{{z_k}} + 2i \bar{z}_k \diffp{}{w}
\]
for $k = 1, \dots, n-1$
and complexify these to get
\[
    \cL_k = \diffp{}{{z_k}} + 2i \bar{\zeta}_k \diffp{}{w}.
\]
Write $\epsilon_k = 2i \bar{\zeta}_k$ and compute
\begin{gather*}
    \cL_k = \diffp{}{{z_k}} + \epsilon_k \diffp{}{w}, \\
    \cL_j \cL_k = \diffp{}{{z_j}{z_k}} + 2 \epsilon_j \diffp{}{{z_k}{w}} + 2 \epsilon_k \diffp{}{{z_j}{w}} + \epsilon_j \epsilon_k \diffp{}{{w^2}}
\end{gather*}
for $j, k = 1, \dots, n-1$.
On $\delH_n$, we get the basic equation $\Im g = \norm{\tilde{f}}^2$, that is,
\[
    \Im g(z,w) 
    = \frac{g(z,w) - \overline{g(z,w)}}{2i}
    = \tilde{f}(z,w) \cdot \overline{\tilde{f}(z,w)}.
\]
Complexification gives us
\begin{equation}
\label{eq:formal}
    \frac{g(z,w) - \overline{g(\zeta, \eta)}}{2i}
    = \tilde{f}(z,w) \cdot \overline{\tilde{f}(\zeta, \eta)}
\end{equation}
along any Segre variety $Q_{(\zeta, \eta)}$.

\begin{notation}
For any positive integer $m$,
denote by $[m]$ by the set $\set{1, \dots, m}$.
\end{notation}

\subsection{A Degree Estimate}

We will describe the normalized map $F_p^{***}$ along a Segre variety
and get a degree estimate from there.
Set $n' = n - 1$ and $N' = N - n$.
We apply $\cL_k$ for $k \in [n']$ and $\cL_j \cL_k$ for $(j, k) \in \mathcal{S}_0$ on \cref{eq:formal} to get
\begin{gather*}
    \frac{1}{2i} \cL_k g(z,w) = \cL_k\tilde{f}(z,w) \cdot \overline{\tilde{f}(\zeta,\eta)}, \\
    \frac{1}{2i} \cL_j \cL_k g(z,w) = \cL_j \cL_k\tilde{f}(z,w) \cdot \overline{\tilde{f}(\zeta,\eta)}.
\end{gather*}
Letting $(z,w) = 0$ and $\eta = 0$ gives us
\[
    \frac{1}{2i} (0 - \overline{g(\zeta, 0)})
    = 0 \cdot \overline{\tilde{f}(\zeta,0)},
\]
that is, $\overline{g(\zeta, 0)} = 0$,
and
\begin{gather*}
    \frac{1}{2i}
    \restrict{
    \begin{bmatrix}
        \cL_1 g \\
        \vdots \\
        \cL_{n'} g \\
        \cL_1 \cL_1 g \\
        \vdots \\
        \cL_{n'} \cL_{n'} g
    \end{bmatrix}
    }{(0,0)}
    =
    \restrict{
    \begin{bmatrix}
        \cL_1 \tilde{f} \\
        \vdots \\
        \cL_{n'} \tilde{f} \\
        \cL_1 \cL_1 \tilde{f} \\
        \vdots \\
        \cL_{n'} \cL_{n'} \tilde{f}
    \end{bmatrix}
    }{(0,0)} \transpose{\overline{\tilde{f}(\zeta,0)}}
    =
    \restrict{
    \begin{bmatrix}
        \cL_1 f & \cL_1 \phi \\
        \vdots & \vdots \\
        \cL_{n'} f & \cL_{n'} \phi \\
        \cL_1 \cL_1 f & \cL_1 \cL_1 \phi \\
        \vdots & \vdots \\
        \cL_{n'} \cL_{n'} f & \cL_{n'} \cL_{n'} \phi
    \end{bmatrix}
    }{(0,0)} \transpose{\overline{\tilde{f}(\zeta,0)}}.
\end{gather*}
Define
\[
    \lambda_{jk} =
    \begin{cases}
        2 \mu_{jk}  & j = k \\
        \mu_{jk}    & j \neq k
    \end{cases}
\]
for $(j, k) \in \mathcal{S}_0$.
We will label the components of $\phi$ both by single indices $\ell \in [N-n]$ and double indices $(j, k) \in \mathcal{S}_0$,
and write $\iota (j, k) = \ell$ and $\iota^{-1} (\ell) = (j, k)$.

At $(0,0)$,
we compute
\begin{gather*}
    \cL_k g = \epsilon_k, \\
    \cL_j \cL_k g = 0, \\
    \cL_k f = \delta_k := (0, \dots, 0, 1, 0, \dots, 0) \in \C^{n'}, \\
    \cL_j \cL_k f
    = 0 + \lambda_j \epsilon_k \delta_j + \lambda_k \epsilon_j \delta_k + \epsilon_j \epsilon_k \times 0
    = \lambda_j \epsilon_k \delta_j + \lambda_k \epsilon_j \delta_k \in \C^{n'}, \\
    \cL_k \phi
    = 0 \in \C^{N'}, \\
    \cL_j \cL_k \phi
    = \lambda_{jk} \delta_{\iota(j,k)} 
    + \epsilon_j (\phi_{\iota^{-1}(1)}^{(k)}, \dots, \phi_{\iota^{-1}(N')}^{(k)})
    + \epsilon_k (\phi_{\iota^{-1}(1)}^{(j)}, \dots, \phi_{\iota^{-1}(N')}^{(j)}) \\
    + \epsilon_j \epsilon_k (2\phi_{\iota^{-1}(1)}^{(0)}, \dots, 2\phi_{\iota^{-1}(N')}^{(0)})
\end{gather*}
for $k \in [n']$ and $(j,k) \in \mathcal{S}_0$.

Set $\phi^{(k)} = (\phi_{\iota^{-1}(1)}^{(k)}, \dots, \phi_{\iota^{-1}(N')}^{(k)})$
and $\phi^{(0)} = (\phi_{\iota^{-1}(1)}^{(0)}, \dots, \phi_{\iota^{-1}(N')}^{(0)})$, so that at $(0,0)$,
\[
    \cL_j \cL_k \phi
    = \lambda_{jk} \delta_{\iota(j,k)} 
    + \epsilon_j \phi^{(k)}
    + \epsilon_k \phi^{(j)}
    + 2 \epsilon_j \epsilon_k \phi^{(0)}.
\]
Notice that at $(0,0)$,
\[
    (\cL_k f)_j = I = I_{n'}, \quad
    (\cL_k \phi)_j = 0 = 0_{n' \times N'},
\]
and write
\[
    C = 
    \restrict{
    \begin{bmatrix}
        \cL_1 \tilde{f} \\
        \vdots \\
        \cL_{n'} \tilde{f} \\
        \cL_1 \cL_1 \tilde{f} \\
        \vdots \\
        \cL_{n'} \cL_{n'} \tilde{f}
    \end{bmatrix}
    }{(0,0)}, \quad
    A =
    \restrict{
    \begin{bmatrix}
        \cL_1 \cL_1 f \\
        \vdots \\
        \cL_{n'} \cL_{n'} f
    \end{bmatrix}
    }{(0,0)},
\]
\[
    B =
    \restrict{
    \begin{bmatrix}
        \cL_1 \cL_1 \phi \\
        \vdots \\
        \cL_{n'} \cL_{n'} \phi
    \end{bmatrix}
    }{(0,0)}
    = (\restrict{\cL_j \cL_k \phi}{(0,0)})_{\iota(j,k)}, \text{ and }
    D = 
    \frac{1}{2i}
    \restrict{
    \begin{bmatrix}
        \cL_1 g \\
        \vdots \\
        \cL_{n'} g \\
        \cL_1 \cL_1 g \\
        \vdots \\
        \cL_{n'} \cL_{n'} g
    \end{bmatrix}
    }{(0,0)}
    =
    \frac{1}{2i}
    \begin{bmatrix}
        \epsilon \\
        0
    \end{bmatrix}.
\]
We see that at $(0,0)$, 
$\det B = \prod_{k=1}^{n'} \lambda_{\iota^{-1}(k)} = 2^{n'} \prod_{k=1}^{n'} \mu_{\iota^{-1}(k)} > 0$,
so that near the origin, $\det B > 0$ and $B$ is invertible.
Hence
\[
    C = 
    \begin{bmatrix}
        I & 0 \\
        A & B
    \end{bmatrix}, \quad
    \det C = \det B \neq 0, \quad
    C^{-1} =
    \begin{bmatrix}
        I           & 0 \\
        -B^{-1} A   & B^{-1}
    \end{bmatrix},
\]
and
\[
    \transpose{\overline{\tilde{f}(\zeta,0)}} =
    C^{-1} D
    = 
    \frac{1}{2i}
    \begin{bmatrix}
        \epsilon \\
        -B^{-1} A \epsilon
    \end{bmatrix}
    =
    \frac{1}{2i}
    \frac{
    \begin{bmatrix}
    (\det B) \epsilon \\
    -(\adj B) A \epsilon
\end{bmatrix}
    }{\det B},
\]
which describes $\tilde{f}$ along the Segre variety $Q_0$ in terms of the matrices $A$ and $B$.
We see that $\det B$, $\adj B$, and $A$
are all polynomial maps in $\epsilon$ with
\[
    \deg \det B \leq \prod_{k=1}^{N'} 2 = 2^{N'}, \quad
    \deg \adj B \leq \prod_{k=1}^{N'-1} 2 = 2^{N'-1}, \quad
    \deg A = 1,
\]
and
\[
    \deg (\adj B) A \leq 2^{N'-1} + 1
\]
as polynomial maps in $\epsilon$, so in $\bar{\zeta}$.

We will in fact show in the next section that
\begin{enumerate}
    \item $\deg \det B \leq n$ and
    \item $\deg (\adj B)A \leq n$,
\end{enumerate}
so that 
$\deg 
\begin{bmatrix}
    (\det B) \epsilon \\
    -(\adj B) A \epsilon
\end{bmatrix}
\leq n+1$, giving us $\deg \overline{\tilde{f}(\zeta,0)} \leq n+1$.
Combining this with $\overline{g(\zeta,0)} = 0$, we will have shown that
$\deg \overline{F^{***}_p(\zeta,0)} \leq n+1$, that is, $\deg F^{***}_p|_{Q_0} \leq n+1$, proving our desired result.

\section{A Linear Algebraic Proof of the Claims}
\label{sec:claim-proof}

The proofs of both the claims are completely linear algebraic in nature.

\begin{notation}
\label{notation:homogeneous}
We will decompose a matrix of polynomials $M$ of degree $d$ into the unique homogeneous expansion 
\[
    M = \sum_{j = 0}^d M^{\{j\}},
\]
where $M^{\{j\}}$ is homogeneous of degree $j$, that is,
$M^{\{j\}}(cz) = c^j M^{\{j\}}(z)$ for any $c \in \C$.
\end{notation}

\begin{notation}
\label{notation:matrix}
For any matrix $M$,
we denote by
\begin{itemize}
    \item $M_{jk}$ the $(j,k)$-th element
    \item $M_k$ the column $k$
    \item $M[j]$ the matrix $M$ with row $j$ removed
    \item $M[j, k]$ the matrix $M$ with row $j$ and column $k$ removed.
\end{itemize}
\end{notation}

\subsection{Matrix Structures}

We will look at the columns of linear and quadratic terms of matrix $B$ and those of matrix $A$.
We define the matrix $E \in \C^{N' \times n'}$ and the column vector $e \in \C^{N'}$ by defining $E_{jk}$ and $e_j$ in 
the following way:
Write $(p, q) = \iota^{-1}(j)$, let $\delta$ be the Kronecker delta function, and define
\[
    E_{jk} := \delta_q^k \epsilon_p + \delta_p^k \epsilon_q, \quad
    e_j := 2 \epsilon_p \epsilon_q.
\]
We write $E_k$ for columns of $E$ to get that
\[
    \sum_{k = 1}^{n'} \epsilon_k E_k
    = e.
\]

Using \cref{notation:homogeneous},
write $B = \sum_{k = 0}^2 B^{\{k\}}$.
We see that the quadratic terms of $B$ form the matrix
\begin{align*}
    B^{\{2\}} 
    &= \left(\restrict{\cL_j \cL_k \phi}{(0,0)}\right)_{\iota(j,k)}^{\{2\}} \\
    &= (2 \epsilon_j \epsilon_k \phi^{(0)})_{\iota(j,k)} \\
    &= 2 e \transpose{\phi^{(0)}},
\end{align*}
so that all its columns are multiples of the column matrix $e$,
making $B^{\{2\}}$ a determinant-$0$ rank-$1$ matrix.
Moreover, as 
\[
    e = \sum_{k = 1}^{n'} \epsilon_k E_k,
\]
all columns of $B^{\{2\}}$ are linear combinations of $E_k$s.

The linear terms of $B$ form the matrix
\begin{align*}
    B^{\{1\}}
    &= \left(\restrict{\cL_j \cL_k \phi}{(0,0)}\right)_{\iota(j,k)}^{\{1\}} \\
    &= (\epsilon_j \phi^{(k)} + \epsilon_k \phi^{(j)})_{\iota(j,k)} \\
    &= E_1 \transpose{\phi^{(1)}} + \dots + E_{n'} \transpose{\phi^{(n')}} \\
    &= \sum_{k = 1}^{n'} E_k \transpose{\phi^{(k)}},
\end{align*}
so that all its columns are linear combinations of $E_k$s,
making $B^{\{1\}}$ a determinant-$0$ rank-$n'$ matrix.
As a consequence, columns of $B^{\{k\}}$s are linear combinations of $E_j$s for $k \geq 1$.

Finally
\begin{align*}
    A 
    &= (\restrict{\cL_j \cL_k f}{(0,0)}_{\iota(j,k)} \\
    &= (\lambda_j \epsilon_k \delta_j + \lambda_k \epsilon_j \delta_k)_{\iota(j,k)} \\
    &= 
    \begin{bmatrix}
        \lambda_1 E_1 & \dots & \lambda_{n'} E_{n'}
    \end{bmatrix},
\end{align*}
so that each of its columns $A_k$ is a nonzero multiple of $E_k$.

\subsection{Determinants and Adjugates}

We will need the following two lemmas.

Determinants are multilinear in columns:
Let $M \in \C^{L \times L}$ be a square matrix.
Write $M_\ell$ for columns of $M$ and
decompose a given column $M_j = \sum_{k=1}^K M_j^k$ into finitely many terms.
Then
\begin{align}
\label{eq:additive-det}
\begin{split}
    \det M
    &= \det 
    \begin{bmatrix}
        \displaystyle
        M_1 & \dots & M_{j-1} & \sum_{k=1}^K M_j^k & M_{j+1} & \dots & M_L
    \end{bmatrix} \\
    &= \sum_{k=1}^K
    \det
    \begin{bmatrix}
        M_1 & \dots & M_{j-1} & M_j^k & M_{j+1} & \dots & M_L
    \end{bmatrix}.
\end{split}
\end{align}

Adjugate matrices are not multilinear, but we get the following formula:
\begin{lemma}
\label{lem:additive-adj}
    Let $M \in \C^{L \times L}$ be a square matrix.
    Write $M_\ell$ for columns of $M$ and
    decompose a given column $M_j = \sum_{k=1}^K M_j^k$ into finitely many terms.
    Then
    \begin{align*}
        \adj M
        &= \adj 
        \begin{bmatrix}
            \displaystyle
            M_1 & \dots & M_{j-1} & \sum_{k=1}^K M_j^k & M_{j+1} & \dots & M_L
        \end{bmatrix} \\
        &= \sum_{k=1}^K
        \adj
        \begin{bmatrix}
            M_1 & \dots & M_{j-1} & M_j^k & M_{j+1} & \dots & M_L
        \end{bmatrix} \\
        &-
        (K - 1) \adj
        \begin{bmatrix}
            M_1 & \dots & M_{j-1} & 0 & M_{j+1} & \dots & M_L
        \end{bmatrix}.
    \end{align*}
\end{lemma}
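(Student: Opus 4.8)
The plan is to prove the identity entrywise, using the cofactor description of the adjugate together with the column-multilinearity of the determinant already recorded in \eqref{eq:additive-det}. Recall that $(\adj M)_{ab} = (-1)^{a+b}\det(M[b,a])$, where $M[b,a]$ is $M$ with row $b$ and column $a$ deleted. Let $M^{(k)}$ denote $M$ with its $j$-th column replaced by $M_j^k$, and let $M^{(0)}$ denote $M$ with its $j$-th column replaced by $0$; the goal is to check the claimed identity in each entry $(a,b)$. The decisive dichotomy is whether the deleted column index $a$ equals the index $j$ of the decomposed column.

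First I would treat the case $a \neq j$. Then column $j$ still appears among the columns of the submatrix $M[b,a]$, so $\det(M[b,a])$ is a linear function of that column, and \eqref{eq:additive-det} applied to the submatrix gives $\det(M[b,a]) = \sum_{k=1}^K \det\!\left(M^{(k)}[b,a]\right)$. Since $a \neq j$, the submatrix $M^{(0)}[b,a]$ retains the (now zero) $j$-th column, so $\det\!\left(M^{(0)}[b,a]\right) = 0$. Hence
\[
    (\adj M)_{ab} = \sum_{k=1}^K (\adj M^{(k)})_{ab} = \sum_{k=1}^K (\adj M^{(k)})_{ab} - (K-1)(\adj M^{(0)})_{ab},
\]
which is exactly the $(a,b)$ entry of the claimed right-hand side.

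Next I would treat the case $a = j$. Here column $j$ is deleted in forming $M[b,j]$, so $\det(M[b,j])$ does not depend on column $j$ at all. Consequently the entry $(\adj M)_{jb}$ is unaffected by replacing column $j$, giving $(\adj M^{(k)})_{jb} = (\adj M)_{jb}$ for every $k$ and likewise $(\adj M^{(0)})_{jb} = (\adj M)_{jb}$. The right-hand side then contributes $K\,(\adj M)_{jb} - (K-1)(\adj M)_{jb} = (\adj M)_{jb}$, again matching. Since the two cases exhaust every entry, the matrix identity follows.

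I expect the only genuine subtlety to be bookkeeping with the index convention for the adjugate: because $(\adj M)_{ab}$ transposes the cofactor, it is the deleted column index $a$ — not the entry's column $b$ — that governs whether the decomposed column survives in the minor. This is precisely the source of the failure of multilinearity and of the correction term $-(K-1)\adj[\,M_1\ \dots\ 0\ \dots\ M_L\,]$: the single row $a = j$ of the adjugate is constant under the decomposition and must be counted once rather than $K$ times, whereas the remaining rows behave linearly with vanishing zero-column contribution. As an alternative to the entrywise argument, one may reduce to the case $K = 2$ by induction, iterating the two-term identity $\adj[U+V] = \adj[U] + \adj[V] - \adj[0]$ (with the brackets indicating replacement of column $j$) along the decomposition $M_j = (M_j^1 + \dots + M_j^{K-1}) + M_j^K$.
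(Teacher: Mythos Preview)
Your proof is correct and follows essentially the same route as the paper: both arguments verify the identity entrywise via the cofactor formula $(\adj M)_{ab}=(-1)^{a+b}\det M[b,a]$, splitting on whether the deleted column index equals $j$ and invoking \eqref{eq:additive-det} in the $a\neq j$ case while using independence from column $j$ in the $a=j$ case. The only differences are cosmetic (your notation $M^{(k)},M^{(0)}$ and the aside on reducing to $K=2$ by induction); the logical structure is identical.
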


\begin{proof}
The key to the proof is that all the three matrices under the $\adj$ operator
on the middle and right sides of the formula stay the same if we remove their $j$-th columns.
Using \cref{notation:matrix}, we get that
the $(\ell, m)$-th element of $\adj M$ is
\begin{align*}
    &(-1)^{\ell+m} \det M[m, \ell] \\
    &= (-1)^{\ell+m} \det
    \begin{bmatrix}
        \displaystyle
        M_1 & \dots & M_{j-1} & \sum_{k=1}^K M_j^k & M_{j+1} & \dots & M_L
    \end{bmatrix}
    [m, \ell] \\
    &= 
    \begin{cases}
        (-1)^{j+m}
        \det
        \begin{bmatrix}
            \displaystyle
            M_1 & \dots & M_{j-1} & \sum_{k=1}^K M_j^k & M_{j+1} & \dots & M_L
        \end{bmatrix}
        [m,j],
        & \text{if } \ell = j \\
        (-1)^{\ell+m}
        \det 
        \begin{bmatrix}
            \displaystyle
            M_1 & \dots & M_{j-1} & \sum_{k=1}^K M_j^k & M_{j+1} & \dots & M_L
        \end{bmatrix}
        [m,\ell],
        & \text{if } \ell \neq j
    \end{cases} \\
    &=
    \begin{cases}
        (-1)^{j+m}
        \det
        \begin{bmatrix}
            \displaystyle
            M_1 & \dots & M_{j-1} & 0 & M_{j+1} & \dots & M_L
        \end{bmatrix}
        [m,j],
        & \text{if } \ell = j \\
        (-1)^{\ell+m}
        \sum_{k=1}^K
        \det 
        \begin{bmatrix}
            \displaystyle
            M_1 & \dots & M_{j-1} & M_j^k & M_{j+1} & \dots & M_L
        \end{bmatrix}
        [m,\ell],
        & \text{if } \ell \neq j
    \end{cases},
\end{align*}
where the last case of the last expression uses \cref{eq:additive-det}.
On the other hand, 
the $(\ell, m)$-th element of 
the sum of the adjugate matrices in the formula is
\begin{align*}
    &\sum_{k=1}^K 
    (-1)^{\ell+m} \det
    \begin{bmatrix}
        M_1 & \dots & M_{j-1} & M_j^k & M_{j+1} & \dots & M_L
    \end{bmatrix}    
    [m, \ell] \\
    &= 
    \sum_{k=1}^K
    \begin{cases}
        (-1)^{j+m}
        \det
        \begin{bmatrix}
            \displaystyle
            M_1 & \dots & M_{j-1} & M_j^k & M_{j+1} & \dots & M_L
        \end{bmatrix}
        [m,j],
        & \text{if } \ell = j \\
        (-1)^{\ell+m}
        \det 
        \begin{bmatrix}
            \displaystyle
            M_1 & \dots & M_{j-1} & M_j^k & M_{j+1} & \dots & M_L
        \end{bmatrix}
        [m,\ell],
        & \text{if } \ell \neq j
    \end{cases} \\
    &=
    \begin{cases}
        \sum_{k=1}^K
        (-1)^{j+m}
        \det
        \begin{bmatrix}
            \displaystyle
            M_1 & \dots & M_{j-1} & 0 & M_{j+1} & \dots & M_L
        \end{bmatrix}
        [m,j],
        & \text{if } \ell = j \\
        \sum_{k=1}^K
        (-1)^{\ell+m}
        \det 
        \begin{bmatrix}
            \displaystyle
            M_1 & \dots & M_{j-1} & M_j^k & M_{j+1} & \dots & M_L
        \end{bmatrix}
        [m,\ell],
        & \text{if } \ell \neq j
    \end{cases}.
\end{align*}
This means that the $(\ell,m)$-th element of 
\[
    \sum_{k=1}^K
    \adj
    \begin{bmatrix}
        M_1 & \dots & M_{j-1} & M_j^k & M_{j+1} & \dots & M_L
    \end{bmatrix}
    - \adj M
\]
equals
\begin{align*}
    &\begin{cases}
        (K-1)
        (-1)^{j+m}
        \det
        \begin{bmatrix}
            \displaystyle
            M_1 & \dots & M_{j-1} & 0 & M_{j+1} & \dots & M_L
        \end{bmatrix}
        [m,j],
        & \text{if } \ell = j \\
        0,
        & \text{if } \ell \neq j
    \end{cases} \\
    &= 
    (K - 1)
    \begin{cases}
        (-1)^{j+m}
        \det
        \begin{bmatrix}
            \displaystyle
            M_1 & \dots & M_{j-1} & 0 & M_{j+1} & \dots & M_L
        \end{bmatrix}
        [m,j],
        & \text{if } \ell = j \\
        (-1)^{\ell+m}
        \det 
        \begin{bmatrix}
            \displaystyle
            M_1 & \dots & M_{j-1} & 0 & M_{j+1} & \dots & M_L
        \end{bmatrix}
        [m,\ell],
        & \text{if } \ell \neq j
    \end{cases},
\end{align*}
which is the $(\ell,m)$-the element of 
$
(K - 1)
\adj
\begin{bmatrix}
    \displaystyle
    M_1 & \dots & M_{j-1} & 0 & M_{j+1} & \dots & M_L
\end{bmatrix}
$.
\end{proof}

\begin{lemma}
\label{lem:adjugate}
    Suppose that $M \in \C^{L \times L}$ is a nonzero square matrix, and
    there are an integer $K$ with $0 < K < L$,
    linearly independent column vectors $U_1, \dots, U_K \in \C^L$ forming the matrix $U$, linearly independent column vectors $V_1, \dots, V_K \in \C^K$ forming the matrix $V$, and 
    column vectors $T_1, \dots, T_{L-K} \in \C^L$ forming the matrix $T$
    such that 
    $M = 
    \begin{bmatrix}
        \sum_{k = 1}^K U_k \transpose{V_k} & T_1 & \dots & T_{L-K}
    \end{bmatrix} = 
    \begin{bmatrix}
        U \transpose{V} & T
    \end{bmatrix}
    $.
    Then 
    \begin{enumerate}
        \item \label{lem:adjugate-rows}
    For all $K < j \leq L$, row $j$ of $(\adj M) U$ is the zero vector, or equivalently, 
    for all $1 \leq k \leq K < j \leq L$, the $j$-th element of $(\adj M) U_k$ is zero.
        \item \label{lem:nullity-1}
    If also rank of $M$ is $L - 1$,
    then $(\adj M) U = 0$, or equivalently, 
    $(\adj M) U_k = 0$ for all $k \in [K]$.
    \end{enumerate}
\end{lemma}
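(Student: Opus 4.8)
The plan is to exploit the defining identity $(\adj M)\, M = (\det M)\, I$ together with the special structure of the first $K$ columns of $M$. Since $M = \begin{bmatrix} U\transpose{V} & T \end{bmatrix}$, the $j$-th column of $M$ for $j \le K$ is precisely the $j$-th column of $U\transpose{V}$, namely $M_j = \sum_{k=1}^{K} V_{jk}\, U_k$, a linear combination of the vectors $U_1,\dots,U_K$ whose coefficients are read off from the rows of $V$. Note that the $U_k$ themselves need not be columns of $M$, but this expression is all the structure I will need.

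First I would apply $\adj M$ to these columns. Writing $W := (\adj M)\,U \in \C^{L\times K}$ for the matrix whose $k$-th column is $(\adj M)\,U_k$, the identity $(\adj M)\,M_j = (\det M)\,e_j$ for $j \le K$ becomes, after substituting $M_j = \sum_k V_{jk} U_k$,
\[
    \sum_{k=1}^{K} V_{jk}\,(\adj M)\,U_k = (\det M)\, e_j, \qquad j = 1,\dots,K,
\]
where $e_j$ is the $j$-th standard basis vector of $\C^L$. Collecting these $K$ vector identities columnwise is exactly the matrix equation
\[
    W\,\transpose{V} = (\det M)\begin{bmatrix} I_K \\ 0 \end{bmatrix},
\]
the right-hand side being the $L \times K$ matrix whose top $K \times K$ block is the identity and whose bottom $(L-K)\times K$ block vanishes.

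Next I would use that $V$ is invertible: since $V_1,\dots,V_K$ are linearly independent vectors in $\C^K$, the $K\times K$ matrix $V$, hence $\transpose{V}$, is invertible. Multiplying on the right by $(\transpose{V})^{-1}$ yields the closed form
\[
    W = (\adj M)\,U = (\det M)\begin{bmatrix} (\transpose{V})^{-1} \\ 0 \end{bmatrix}.
\]
From this both conclusions drop out immediately. The bottom $L-K$ rows of the right-hand side are zero, so for every $j$ with $K < j \le L$ the $j$-th row of $(\adj M)\,U$ vanishes, which is assertion \ref{lem:adjugate-rows}. And if $\rank M = L-1$, then $\det M = 0$, so the entire right-hand side is $0$ and $(\adj M)\,U = 0$, which is assertion \ref{lem:nullity-1}.

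I expect no serious obstacle: once the identity $(\adj M)\,M = (\det M)\,I$ is combined with the structural formula $M_j = \sum_k V_{jk}U_k$, the argument is a short computation. The only point requiring care is the bookkeeping with the transpose, so that the $K$ vector identities assemble into $W\transpose{V} = (\det M)\begin{bmatrix} I_K \\ 0 \end{bmatrix}$ rather than some mis-transposed variant, and so that the subsequent inversion uses $\transpose{V}$ correctly (this is where the linear independence of the $V_k$ enters). It is worth observing that the linear independence of the $U_k$, while part of the hypotheses, is not actually used in this argument.
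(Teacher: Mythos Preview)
Your proof is correct and, in fact, cleaner than the paper's. Both arguments hinge on the invertibility of $V$ (equivalently $\transpose{V}$), but they exploit it differently.

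The paper treats the two parts separately. For part~\ref{lem:adjugate-rows} it interprets row $j$ of $\adj M$ via cofactors, so that $(\text{row $j$ of }\adj M)\,M_k$ becomes a determinant with column $M_k$ appearing twice and hence vanishes; it then post-multiplies by an inverse of $\transpose{V}$ to pass from $(\adj M)\,U\transpose{V}$ to $(\adj M)\,U$. For part~\ref{lem:nullity-1} the paper instead invokes the fact that $\rank M = L-1$ forces $\rank(\adj M)=1$, writes $\adj M = x\transpose{y}$ with $y \in \ker \transpose{M}$, and deduces $(\adj M)\,U = 0$ from $\transpose{y}M = 0$.

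Your approach, by contrast, packages everything into the single identity $(\adj M)\,M = (\det M)\,I$, assembles the first $K$ columns into the matrix equation $(\adj M)\,U\,\transpose{V} = (\det M)\begin{bmatrix} I_K \\ 0 \end{bmatrix}$, and inverts $\transpose{V}$ once to obtain the closed form $(\adj M)\,U = (\det M)\begin{bmatrix} (\transpose{V})^{-1} \\ 0 \end{bmatrix}$, from which both conclusions are immediate. This is shorter, handles (i) and (ii) uniformly, and, as you noticed, actually proves (ii) under the weaker hypothesis $\det M = 0$ rather than the full $\rank M = L-1$; your observation that the linear independence of the $U_k$ is never used is also correct.
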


\begin{proof}
    As the columns of the square matrix $V$ are linearly independent, the reduced row echelon form of $V$ is the identity matrix $I = I_K$,
    that is, there is an invertible matrix $W \in \C^{K \times K}$ with
    $\transpose{W} V = I$.
    This gives us 
    \[
        U \transpose{V} W 
        = U I
        = U.
    \]

    \begin{enumerate}
        \item 
    Write $M[\ell, j]$ to mean the matrix $M$ with row $\ell$ and column $j$ removed using \cref{notation:matrix}.
    Now for all $1 \leq k \leq K < j \leq L$,
    \begin{align*}
        \text{row $j$ of } (\adj M) M_k
        &= (\text{row $j$ of } \adj M) M_k \\
        &= \sum_{\ell = 1}^L (-1)^{j + \ell}
        \det
        \begin{bmatrix}
            M[1, j] & \dots & M[L, j]
        \end{bmatrix}
        M_k \\
        &= 
        \det
        \begin{bmatrix}
            M_1 & \dots & M_k & \dots & M_{j-1} & M_{k} & M_{j+1} & \dots & M_L
        \end{bmatrix} \\
        &= 0.
    \end{align*}
    
    Since 
    \begin{align*}
        \text{row $j$ of } (\adj M) U \transpose{V} 
        &= \text{row $j$ of } (\adj M) 
            \begin{bmatrix}
                M_1 & \dots & M_K
            \end{bmatrix} \\
        &= \text{row $j$ of }
            \begin{bmatrix}
                (\adj M) M_1 & \dots & (\adj M) M_K
            \end{bmatrix} \\
        &= 0,
    \end{align*}
    we get 
    \[
        0 = \text{row $j$ of } (\adj M) 
        U \transpose{V} W        
        = \text{row $j$ of } (\adj M) U.
    \]
    Hence row $j$ of $(\adj M) U = 0$.

        \item
    As $\rank M = L - 1$, we get that $\rank \adj M = 1$,
    and there are $x \in \ker M$ and $y \in \ker \transpose{M}$ such that $\adj M = x \transpose{y}$.

    Since $\transpose{y} M = 0$, we get
    \begin{align*}
        0
        &= x \transpose{y} M W \\
        &= (\adj M)
        \begin{bmatrix}
            U \transpose{V} W & T W
        \end{bmatrix} \\
        &= 
        \begin{bmatrix}
            (\adj M) U & (\adj M) T W
        \end{bmatrix}.
    \end{align*}

    Hence $(\adj M) U = 0$. 
    \qedhere
    \end{enumerate}
\end{proof}

\subsection{Claims}

Now we are ready to prove both our claims.

\begin{proposition}
    If $B$ and $n$ are as before, then $\deg \det B \leq n$.
\end{proposition}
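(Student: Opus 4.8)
The plan is to expand $\det B$ by multilinearity over the homogeneous decomposition $B = B^{\{0\}} + B^{\{1\}} + B^{\{2\}}$ and then, for each resulting determinant, to bound how many columns can carry a positive-degree factor using two linear-dependence constraints. First I would record the degree in $\epsilon$ (equivalently in $\bar\zeta$, since $\epsilon_k = 2i\bar\zeta_k$) of each homogeneous piece. The matrix $B^{\{0\}}$ is diagonal with entries $\lambda_{\iota^{-1}(1)}, \dots, \lambda_{\iota^{-1}(N')}$ and so has degree $0$; since each $E_k$ is linear in $\epsilon$, the identity $B^{\{1\}} = \sum_{k=1}^{n'} E_k \transpose{\phi^{(k)}}$ shows $B^{\{1\}}$ has degree $1$; and since $e$ is quadratic in $\epsilon$, the identity $B^{\{2\}} = 2 e \transpose{\phi^{(0)}}$ shows $B^{\{2\}}$ has degree $2$.

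Next I would apply \cref{eq:additive-det} column by column, writing each column as $B_\ell = B^{\{0\}}_\ell + B^{\{1\}}_\ell + B^{\{2\}}_\ell$, so that $\det B$ becomes a sum of determinants, one for each assignment choosing a single homogeneous part in each of the $N'$ columns. For such a term, let $b$ and $c$ denote the number of columns taking their $B^{\{1\}}$ and $B^{\{2\}}$ parts, respectively; then the term has degree at most $b + 2c$. The whole argument reduces to bounding $b + 2c$ over the nonvanishing terms.

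The heart of the proof is two constraints forcing most terms to vanish. Since every column of $B^{\{1\}}$ and of $B^{\{2\}}$ is a linear combination of the $n'$ vectors $E_1, \dots, E_{n'}$, any term with more than $n'$ columns drawn from these parts has linearly dependent columns and hence vanishes; thus a surviving term satisfies $b + c \leq n'$. Moreover, every column of $B^{\{2\}} = 2 e \transpose{\phi^{(0)}}$ is a scalar multiple of the single vector $e$, so two such columns are already linearly dependent; thus a surviving term satisfies $c \leq 1$. Combining the two bounds, each surviving term has degree at most $b + 2c = (b + c) + c \leq n' + 1 = n$, and therefore $\deg \det B \leq n$.

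The main obstacle is that the span bound $b + c \leq n'$ by itself only gives $\deg \det B \leq 2n' = 2n - 2$; extracting the sharp bound $n$ requires exploiting the finer rank-one structure of $B^{\{2\}}$, namely that all of its columns are proportional to the single vector $e$, to enforce $c \leq 1$. I would therefore stress that it is the interplay of the two constraints—an $n'$-dimensional span spanned by $E_1, \dots, E_{n'}$ together with the single quadratic direction $e$—that produces the decisive saving over the naive estimate.
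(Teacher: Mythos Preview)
Your proposal is correct and follows essentially the same approach as the paper: both expand $\det B$ by multilinearity over the decomposition $B = B^{\{0\}}+B^{\{1\}}+B^{\{2\}}$, then use that columns of $B^{\{1\}}$ and $B^{\{2\}}$ lie in the span of $E_1,\dots,E_{n'}$ to force $b+c\le n'$ and that columns of $B^{\{2\}}$ are multiples of $e$ to force $c\le 1$. The paper phrases this as a three-case analysis in the variables $n_{\ge 1}=b+c$ and $n_2=c$, but the logic and the resulting bound $b+2c\le n$ are identical.
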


\begin{proof}
Repetitive use of \cref{eq:additive-det} on all columns of $B$ gives us
\[
    \det B 
    = \sum_{0 \leq \sum_{k = 0}^{N'} i_k \leq 2N'}
    \det
    \begin{bmatrix}
        B_1^{\{i_1\}} & \dots & B_{N'}^{\{i_{N'}\}}
    \end{bmatrix}
    = \sum_I \det B_I,
\]
where 
$I := (i_1, \dots, i_{N'})$ and
$
B_I := 
\begin{bmatrix}
    B_1^{\{i_1\}} & \dots & B_{N'}^{\{i_{N'}\}}
\end{bmatrix}
$.
This tells us that
\[
    \deg \det B_I \leq \sum_k i_k
\]
for all $I$ and
\[
    \deg \det B \leq \max_I \deg \det B_I.
\]

Notice that each column of each matrix $B_I$ is homogeneous.
The idea is that if there are enough constant columns, the degree of the determinant is low enough,
and if there are too many nonconstant columns, these must be linearly dependent.

Take any $I$, 
put $M = B_I$,
write $n_{\geq 1} = \#\set{k \suchthat i_k \geq 1}$ for number of columns of $M$ with degree at least $1$, and
$n_j = \#\set{k \suchthat i_k = j}$ for number of columns of $M$ with degree $j$.
This gives us 
$\sum_k i_k = 0 \cdot n_0 + 1 \cdot n_1 + 2 \cdot n_2 = n_{\geq 1} + n_2$.

As columns of $B^{\{k\}}$s are linear combinations of $n' = n - 1$ columns $E_j$s for $k \geq 1$, so are $M^{\{k\}}$s.

\subsection*{Case 1}
First assume that $n_{\geq 1} \geq n$.
Then there are at least $n$ columns $M_k$ which are 
linear combinations of $n - 1$ columns $E_j$s, so that 
$\det M = 0$.

\subsection*{Case 2}
Now assume that $n_2 \geq 2$.
Then there are at least two columns $M_k$ which are multiples of $e$, so that $\det M = 0$.

\subsection*{Case 3}
Finally assume that $n_{\geq 1} \leq n - 1$ and $n_2 \leq 1$.
Then
\[
    \deg \det M \leq n_{\geq 1} + n_2 \leq (n - 1) + 1 = n.
\]

Hence $\deg \det B_I \leq n$ for all $I$ and so $\deg \det B \leq n$.
\end{proof}

\begin{proposition}
\label{prop:adj-B-A}
    If $B$, $A$, and $n$ are as before, then $\deg (\adj B) A \leq n$.
\end{proposition}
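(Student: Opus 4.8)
The goal is to bound $\deg (\adj B) A$ as a polynomial in $\epsilon$ (equivalently $\bar\zeta$) by $n$. Recall $A = [\lambda_1 E_1 \mid \dots \mid \lambda_{n'} E_{n'}]$, so each column of $A$ is a nonzero multiple of $E_k$; hence it suffices to bound $\deg (\adj B) E_k$ by $n$ for each $k \in [n']$. The plan is to mimic the determinant proof: expand $\adj B$ by multilinearity in the columns of $B$, but since the adjugate is not multilinear, I would use \cref{lem:additive-adj} to split each column $B_m = B_m^{\{0\}} + B_m^{\{1\}} + B_m^{\{2\}}$ into its homogeneous pieces. Applying this repeatedly across all $N'$ columns expresses $\adj B$ as a signed sum of terms $\adj B_I$, where $B_I$ has homogeneous columns of prescribed degrees $i_1, \dots, i_{N'}$, plus correction terms (the $(K-1)$-weighted terms with zeroed columns) that only lower degrees, so they cause no trouble for an upper bound.

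For a fixed multi-index $I$, I would bound $\deg (\adj B_I) E_k$. The adjugate of an $N' \times N'$ matrix is built from $(N'-1) \times (N'-1)$ minors, each omitting one column of $B_I$; so $\deg \adj B_I \le \sum_{m} i_m - \min_m i_m$ at worst, but this crude count is not enough. The real leverage is the structural fact, already established in the matrix-structure subsection, that every column of $B^{\{1\}}$ and $B^{\{2\}}$ is a linear combination of the $n' = n-1$ fixed columns $E_1, \dots, E_{n'}$, with $B^{\{2\}}$'s columns all multiples of the single vector $e = \sum_k \epsilon_k E_k$. This is exactly where \cref{lem:adjugate} enters: with $U = [E_1 \mid \dots \mid E_{n'}]$ playing the role of the $U_k$'s, the nonconstant columns of $B_I$ factor through $U$, and part~\ref{lem:adjugate-rows} forces certain rows of $(\adj B_I)U$ — and therefore of $(\adj B_I)E_k$ — to vanish when there are enough nonconstant columns.

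Concretely, I would case-split on $n_{\ge 1} = \#\{m : i_m \ge 1\}$ and $n_2 = \#\{m : i_m = 2\}$ as in the determinant proof. When $n_{\ge 1}$ or $n_2$ is large, the minors defining $\adj B_I$ acquire repeated $E$-columns (or repeated $e$-columns) and vanish, or \cref{lem:adjugate} kills the relevant entries of $(\adj B_I)E_k$; when $n_{\ge 1} \le n-1$ and $n_2 \le 1$, a direct degree count gives $\deg (\adj B_I) E_k \le (\sum_m i_m) + 1 \le n_{\ge 1} + n_2 + 1$, and I would need the $+1$ from $E_k$ (degree $1$ in $\epsilon$) to still land at or below $n$. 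The delicate bookkeeping is that a single minor already drops one column of $B_I$, so the effective degree budget is $\sum_m i_m$ minus the degree of the dropped column; combined with multiplication by the degree-$1$ vector $E_k$, the worst surviving case should be exactly $n$.

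The main obstacle I anticipate is the boundary case where the degree count is tight: I expect a configuration (for instance $n_{\ge 1} = n-1$ with $n_2 = 1$, or $n_{\ge 1} = n$ with $n_2 = 0$) where the naive bound gives $n+1$ and one must invoke \cref{lem:adjugate}\ref{lem:adjugate-rows} — or the pigeonhole collision of $E$-columns inside the $(N'-1)\times(N'-1)$ minors — to show that the top-degree contribution to $(\adj B_I)E_k$ actually cancels or lies in a vanishing row. Verifying that the structural rank constraint (at most $n-1$ independent nonconstant columns, all living in $\operatorname{span}\{E_1,\dots,E_{n'}\}$) always forces that top term to drop is the crux; everything else is multilinear expansion and degree counting.
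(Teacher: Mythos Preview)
Your outline is the paper's approach: expand $\adj B$ via \cref{lem:additive-adj} (absorbing the zero-column correction terms by allowing $i_k=-\infty$), case-split on $n_{\ge 1}$, and invoke \cref{lem:adjugate} together with the structural facts $B^{\{1\}}_m,B^{\{2\}}_m\in\operatorname{span}\{E_1,\dots,E_{n'}\}$ and $B^{\{2\}}_m\in\operatorname{span}\{e\}$. You have also correctly located the one delicate point.

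What you leave unexecuted---and what closes the $n+1$ versus $n$ gap---is that the analysis must proceed \emph{row by row} on $(\adj B_I)A$, with the dichotomy keyed to $i_k$, the homogeneous degree of the column that row $k$ of the adjugate omits. In the regime $n_{\ge 1}\le n-1$: if $i_k\le 0$, then (after permuting the nonconstant columns of $B_I$ to the front) \cref{lem:adjugate}\ref{lem:adjugate-rows} forces row $k$ of $(\adj B_I)E_j$ to vanish for every $j$, hence row $k$ of $(\adj B_I)A=0$; if $i_k\ge 1$, then every entry in row $k$ is a minor that has already dropped a column of degree $\ge 1$, and one splits once more: either $n_2\le i_k$, so $\deg(\text{row }k\text{ of }\adj B_I)\le n_{\ge 1}+n_2-i_k\le n-1$ and multiplication by the degree-$1$ matrix $A$ lands at $\le n$; or $n_2\ge i_k+1$, so each minor $B_I[j,k]$ still retains at least two columns proportional to $e[j]$ and therefore vanishes. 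The regime $n_{\ge 1}\ge n$ is handled as you indicate, using \cref{lem:adjugate}\ref{lem:nullity-1} when $\rank B_I=N'-1$ and $\adj B_I=0$ when the rank drops further. Your sketch contains every ingredient; this row-wise dichotomy on $i_k$ is the assembly step you flagged as the crux but did not carry out.
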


\begin{proof}
Repetitive use of \cref{lem:additive-adj} on all columns of $B$ gives us
\[
    \adj B 
    = \sum_{i_1, \dots, i_{N'}}
    \text{constant } \cdot \adj
    \begin{bmatrix}
        B_1^{\{i_1\}} & \dots & B_{N'}^{\{i_{N'}\}}
    \end{bmatrix}
    = \sum_I \text{constant } \cdot \adj B_I,
\]
where 
$I := (i_1, \dots, i_{N'})$,
$B_I := 
\begin{bmatrix}
    B_1^{\{i_1\}} & \dots & B_{N'}^{\{i_{N'}\}}
\end{bmatrix},
$
and we also allow $i_k$ to be $-\infty$ to mean that $B_k^{\{-\infty\}}$ is the zero column vector from \cref{lem:additive-adj}.
This tells us that
\[
    \deg \adj B_I 
    \leq \max_{i_j \geq 0} \Big( \sum_k i_k - i_j \Big)
    = \sum_k i_k - \min_{i_j \geq 0} i_j
\]
for all $I$ and
\[
    \deg \adj B \leq \max_I \adj \det B_I.
\]

Once again,
each column of each matrix $B_I$ is homogeneous.
The idea is that if there are enough constant columns, the degree of the determinant is low enough,
and if there are too many nonconstant columns, these must be linearly dependent.
The proof is more technical than the previous one.

Take any $I$ and
put $M = B_I$.
Using \cref{notation:matrix},
the elements of $\adj M$ are given by
\[
    (\adj M)_{kj} = (-1)^{k+j} \det M[j,k], \quad
    M[j,k] = 
    \begin{bmatrix}
        M_1[j] & \dots & \widehat{M_k[j]} & \dots & M_{N'}[j]
    \end{bmatrix}.
\]
Then
\[
    \deg \det M[j,k] \leq \sum_{\ell \geq 0} i_\ell - i_k
\]
for all $j, k$,
\[
    \deg \adj B_I 
    \leq \max_{j,k} \deg \det M[j,k]
    = \max_k \deg \text{ row $k$ of } \adj M,
\]
and
\[
    \deg (\adj B_I) A
    \leq \max_k \deg (\text{row $k$ of } (\adj M) A)
\]
for all $I$.

Write $n_{\geq 1} = \#\set{k \suchthat i_k \geq 1}$ for number of columns of $M$ with degree at least $1$, and for $j \geq 0$,
$n_j = \#\set{k \suchthat i_k = j}$ for number of columns of $M$ with degree $j$.
This gives us 
$\sum_{k \geq 0} i_k = 0 \cdot n_0 + 1 \cdot n_1 + 2 \cdot n_2 = n_{\geq 1} + n_2$.

As columns of $B^{\{k\}}$s are linear combinations of $n' = n - 1$ columns $E_j$s for $k \geq 1$, so are $M^{\{k\}}$s.
So columns of $M^{\{k\}}[\ell]$s are linear combinations of $n - 1$ columns $E_j[\ell]$s for $k \geq 1$.

\subsection*{Case 1}
First assume that $n_{\geq 1} \geq n$.
Then there are at least $n$ columns $M_k$ which are 
linear combinations of $n - 1$ columns $E_j$s, so that 
nullity of $M$ is at least $n - (n-1) = 1$.
Therefore, $\rank M \leq N' - 1$.
If $\rank M = N' - 1$,
we get $(\adj M) E_k = 0$ for all $k \in [n']$ by \cref{lem:adjugate}~\ref{lem:nullity-1},
so that $(\adj M) A = 0$.
If $\rank M \leq N' - 2$, we get $\adj M = 0$.

\subsection*{Case 2}
Finally assume that $n_{\geq 1} \leq n - 1$.
Fix $k \in [N']$.
For $i_k \leq 0$,
row $k$ of $(\adj M) E_j = 0$ for all $j \in [n']$ by \cref{lem:adjugate}~\ref{lem:adjugate-rows},
so that row $k$ of $(\adj M) A = 0$.
Now let $i_k \geq 1$.
We consider two subcases.

If $n_2 - i_k \leq 0$, then
\[
    \deg \det M[j,k]
    \leq \sum_{\ell \geq 0} i_\ell - i_k 
    = n_{\geq 1} + n_2 - i_k 
    \leq (n - 1) - 0 
    = n - 1
\]
for all $j$, so that
\[
    \deg (\text{row $k$ of } (\adj M) A) 
    \leq (n-1) + 1 
    = n.
\]
Otherwise $n_2 - i_k \geq 1$, so that for all $j$,
there are at least two columns of $M[j,k]$ which are multiples of $e[k]$, 
so that $\det M[j,k] = 0$ and row $k$ of $(\adj M) A = 0$.

Hence $\deg (\adj B_I) A \leq n$ for all $I$ and so $\deg (\adj B) A \leq n$.
\end{proof}

This completes the proof of the main result \cref{thm:max-kappa0}.

\bibliographystyle{amsplain}
\bibliography{main}

\end{document}